\documentclass[12pt,a4]{amsart}

\calclayout

\usepackage{xcolor}
\usepackage[lite]{amsrefs}
\usepackage{amssymb}
\usepackage[all,cmtip]{xy}

\usepackage{mathrsfs}
\usepackage{tabularx}
\usepackage{booktabs}
\usepackage[labelfont=bf,format=plain,justification=raggedright,singlelinecheck=false]{caption}
\usepackage{bookmark}
\usepackage{hyperref}

\newcommand{\F}{\mathbb{F}}

\newcommand{\aut}{\mathrm{Aut}}

\numberwithin{equation}{section}

\theoremstyle{plain}
\newtheorem{theorem}[equation]{Theorem}
\newtheorem{corollary}[equation]{Corollary}
\newtheorem{lemma}[equation]{Lemma}

\theoremstyle{definition}

\theoremstyle{remark}
\newtheorem{remark}[equation]{Remark}

\newcommand{\mi}{\alpha}

\begin{document}

\title{A maximal function field of genus 17 over $\mathbb{F}_{11^2}$.}
\author{Peter Beelen}
\address{Department of Applied Mathematics and Computer Science, Technical University of Denmark, DK-2800, Kongens Lyngby, Denmark}
\email{pabe@dtu.dk}

\author{Maria Montanucci}
\address{Department of Applied Mathematics and Computer Science, Technical University of Denmark, DK-2800, Kongens Lyngby, Denmark}
\email{marimo@dtu.dk}

\author{Jonathan Niemann}
\address{Department of Applied Mathematics and Computer Science, Technical University of Denmark, DK-2800, Kongens Lyngby, Denmark}
\email{jtni@dtu.dk}

\keywords{Hermitian function field, maximal function field, automorphism group}
\subjclass[2010]{Primary 11G, 14G, 14H05} 

\begin{abstract}
In this article, we describe a new maximal function field $G$ over the finite field $\F_{11^2}$ with $11^2$ elements and show that it cannot be obtained as a subfield of the Hermitian function field. 
This provides the first known $\mathbb{F}_{p^2}$-maximal function field that satisfies this property. Further, we compute the automorphism group of $G$. The new maximal function field was found using AI.
\end{abstract}

\maketitle

\section{Introduction}

Let $\F_q$ denote the finite field with $q$ elements and denote by $F$ a function field with full constant field $\F_q$ and transcendence degree one over $\F_q$. Such function fields arise in a natural way as the function field of an absolutely irreducible algebraic curve defined over $\F_q$. Let $N_1(F)$ (resp. $g(F)$) denote the number of $\F_q$-rational places of $F$ (resp. the genus of $F$). The famous Hasse--Weil bound states that 
$$N_1(F) \le q+1+2\sqrt{q}g(F).$$
Function fields $F$ of positive genus attaining the Hasse--Weil bound are called maximal function fields. Note that for maximal function fields, necessarily $q$ is a square.

A famous example of a maximal function field is the Hermitian function field $H_q:=\F_{q^2}(x,y)$, where $x^{q+1}+y^{q+1}+1=0$. According to a result by Serre, see for example \cite[Proposition 6]{Kleiman}, any subfield of $H_q$ of transcendence degree one over $\F_{q^2}$ having full constant field $\F_{q^2}$, is again maximal. For a time it was believed that perhaps all maximal function fields could be obtained in this way, but Giulietti and Korchm\'aros found a maximal function field with constant field $\F_{q^6}$ in \cite{giuliettiNewFamilyMaximal2009} that is not isomorphic to a subfield of $H_{q^3}$. Further known maximal function fields which are not subfields of $H_{q^3}$ were found in \cite{GQZ,BDM, TTT} as subfields of the Giulietti and Korchm\'aros function field. On the other hand, for $q=p$ a prime number, all known maximal function fields with constant field $\F_{p^2}$ are subfields of the Hermitian function field $H_p$. 
In fact it has been shown in \cite{BMT} that every $\mathbb{F}_{p^2}$-maximal function field of genus $g$ that has more than $84(g-1)$ automorphisms is a Galois subfield of the Hermitian function field.
The main result of this paper is that for $p=11$, there exists a maximal function field $G$ over $\F_{11^2}$ that is not isomorphic to a subfield of $H_{11}$. 

The paper is organized as follows: we start with introducing the function field $G$ and proving its maximality. Next we compute the automorphism group of $G$. Finally, we show that $G$ is not isomorphic to a subfield of $H_{11}$.

\section{Proof of maximality}

The function field $\F_{11^2}(x,y)$ with $(y^2-1)^2=1-x^{12}$, is a known subfield of the Hermitian curve $H_{11}$. In fact, after replacing $y$ by $3y'$ and $x$ by $2x'$ it is clear that it is of the type described in \cite[Example 6.4, Case 2]{garciaSubfieldsHermitianFunction2000}. It follows from their work that it is a maximal function field of genus nine. 

Now let $\mi \in \F_{11^2}$ be an element satisfying $\mi^2 = -1$ and define the polynomial $f(X) = (X+1)(X^2+X+1)\in \F_{11^2}[X]$. The function field of main interest in this article is the function field $G = \F_{11^2}(x,y,z)$ defined by the equations
\begin{align*}
    \begin{cases}
        (y^2-1)^2 = 1 - x^{12}, \\
        z^2 = (\mi+1)f(x)f(\mi x).
    \end{cases}
\end{align*}

\begin{lemma}\label{lem:genusG}
The function field $G$ has genus $17$.
\end{lemma}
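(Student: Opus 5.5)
Let $F=\F_{11^2}(x,y)$ with $(y^2-1)^2=1-x^{12}$, which has genus $9$ by the discussion above. Then $G=F(z)$ with $z^2=h$, where $h=(\mi+1)f(x)f(\mi x)$. Since the characteristic is odd, $G/F$ is a Kummer extension of degree $2$, provided $h$ is not a square in $F$. The plan is to apply Riemann--Hurwitz in the form
\[
2g(G)-2 = 2(2\cdot 9-2) + \deg \mathrm{Diff}(G/F),
\]
where the different has degree equal to the number of places of $F$ (counted with degree) at which $v_P(h)$ is odd. Since $g(G)=17$ means $32=32+\deg\mathrm{Diff}$ fails, the target is
\[
2\cdot17-2 = 32 + 0 .
\]
So we must show that \emph{no} place ramifies, i.e.\ $G/F$ is unramified. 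This is plausible because $h$ has degree $6$ in $x$, and the factors of $f(x)f(\mi x)$ are exactly linear factors $x-\beta$ with $\beta^{12}=1$: $-1,\omega,\omega^2,\mi,\mi\omega,\mi\omega^2$, up to sign. Here $\omega$ is a primitive cube root of unity, and all of these lie in $\F_{11^2}$, since $12\mid 120$.

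First I would analyze the places of $F$ above each such $x=\beta$. There $1-x^{12}$ has a simple zero in $x$, so $(y^2-1)^2$ vanishes. This forces $y=\pm1$, and locally $(y-1)^2(y+1)^2$ has valuation $e\cdot v(x-\beta)$. In $F/\F(x)$, which has degree $4$, the places over $x=\beta$ are therefore the two places $y=\pm1$, each with ramification index $2$. Hence $v_P(x-\beta)=2$ is even at every place above $\beta$. Next, at the poles of $x$: since $x^{12}$ and $y^4$ balance, $v_\infty(y)=3v_\infty(x)$, and the infinite places of $\F(x)$ are unramified in $F$, because $y^4\approx -x^{12}$ gives $4$ distinct places $y/x^3=\gamma$ with $\gamma^4=-1$, all rational over $\F_{11^2}$. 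Thus $v_P(h)=-6$, which is even. At all other places $h$ is a unit. Consequently every valuation of $h$ is even and $\mathrm{Diff}(G/F)=0$.

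It remains to check that $h$ is not a square in $F$, since otherwise $G$ would not even be a field, or the constant field would grow; this is the main obstacle. Because the divisor of $h$ is $2D$ for some divisor $D$, the question is whether the resulting $2$-torsion class is trivial. My first attempt is to find explicit squares: $(y^2-1)^2=\prod_{\beta^{12}=1}(1-x/\beta)$ up to constants, so $\prod(x-\beta)$ over \emph{all twelve} roots is a square. Our six roots, however, are exactly half of them, namely the roots of $(x+1)(x^3-1)$ times their $\mi$-twists. I would then show that $h$ is not a square by restricting to an intermediate field. One option is to note that $h$, as a rational function in $x$, is not a square in $\F(x)$, and to check all quadratic subextensions of $F/\F(x)$ (such as $\F(x,y^2)$, and $\F(x,w)$ with $w^2$ proportional to $1-x^{12}$) to see whether $h$ becomes a square there. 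A more concrete alternative is to reduce modulo a rational place and count points: if $h$ were a square, its values at the rational places would all be squares in $\F_{11^2}$. So it suffices to exhibit one rational place $P$ of $F$ where $h(P)$ is a nonsquare in $\F_{11^2}$; the twist factor $\mi+1$ is presumably chosen to make exactly this hold at a convenient point, such as $x=0$, $y^2=2$ or $y=0$, where $h(0)=\mi+1$. Since $\mi+1$ is a square in $\F_{121}$ iff $(\mi+1)^{60}=1$, and $(\mi+1)^2=2\mi$ gives $(2\mi)^{30}=2^{30}\mi^{30}=2^{30}\cdot(-1)$ with $2^{30}=(2^{10})^3=1$, we get $-1$: a nonsquare. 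With $h$ not a square, $G/F$ is an unramified degree-$2$ extension, the full constant field stays $\F_{11^2}$ (a constant extension would make $h$ a constant times a square, which the same evaluation at two places with differing square classes can rule out), and Riemann--Hurwitz yields $g(G)=17$.
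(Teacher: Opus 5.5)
Your proposal is correct and follows the paper's proof. Both show that $G/F$ is an unramified quadratic Kummer extension, get degree two from the nonsquare value $h=\mi+1$ at a rational place over $x=0$, keep the full constant field $\F_{11^2}$ via a rational place where $h$ is a nonzero square, and finish with Riemann--Hurwitz (you left that last place unnamed; the paper uses $x=1$, $y=1$, where $h=-1=\mi^2$, and you should cite it explicitly). The one difference is that you check directly that $v_P(h)$ is even at every place of $F$, including $v_P(h)=-6$ at the poles of $x$, whereas the paper goes through the tower $\F_{11^2}(x)\subset\F_{11^2}(x,s)\subset F$ and Abhyankar's lemma; your route also sidesteps the paper's slip that the pole of $x$ ramifies in $\F_{11^2}(x,z)/\F_{11^2}(x)$, which is false since $\deg h=6$.
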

\begin{proof}
We have already mentioned that the function field $\F_{11^2}(x,y)$ is a known maximal function field of genus nine. In particular, it has $320$ $\F_{11^2}$-rational places.
Note that the extension $\F_{11^2}(x,y)/\F_{11^2}(x)$ can be obtained as two successive Kummer extensions of degree two, namely $\F_{11^2}(x) \subset \F_{11^2}(x,s) \subset \F_{11^2}(x,y)$, where $s=y^2-1$ (and hence $s^2=1-x^{12}$). The theory of Kummer extensions implies that the only ramified places in the extension $\F_{11^2}(x,s)/\F_{11^2}(x)$ are the zeroes of $x^{12}-1$. Then the Riemann--Hurwitz theorem implies that the genus of $\F_{11^2}(x,s)$ is five. Since we already know that the genus of $\F_{11^2}(x,y)$ is nine, this implies that the extension $\F_{11^2}(x,y)/\F_{11^2}(x,s)$ is unramified. 
We conclude that only the zeroes $x^{12}-1$ ramify in $\F_{11^2}(x,y)/\F_{11^2}(x)$ and that all ramification indices are equal to two.

The extension $\F_{11^2}(x,z)/\F_{11^2}(x)$ is a Kummer extension of degree two. It is ramified precisely in the zeroes of $f(x)f(\mi x)$ and the pole of $x$. Of course all ramification indices are two in this case. Also note that $x^{12}-1=-f(x)f(\mi x)f(-x)f(-\mi x)$. In particular, by the above, any place $P$ that ramifies in the extension $\F_{11^2}(x,z)/\F_{11^2}(x)$, will also ramify in the extension $\F_{11^2}(x,y)/\F_{11^2}(x)$, in both cases with ramification index two. Using Abhyankhar's lemma \cite[Theorem 3.9.1]{stichtenothAlgebraicFunctionFields2009}, we can therefore conclude that the extension $\F_{11^2}(x,y,z)/\F_{11^2}(x,y)$ is unramified. Moreover, the extension degree $[\F_{11^2}(x,y,z):\F_{11^2}(x,y)]$ is two. Indeed, assume for the moment that $\F_{11^2}(x,y,z)=\F_{11^2}(x,y)$. One of the zeroes of $x$ in $\F_{11^2}(x,y)$ is the $\F_{11^2}$-rational place determined by the values $s(Q)=1$ and $y(Q)=\sqrt{2}$. However, we find that $z(Q)^2=(\mi+1)f(0)^2=\mi+1$, which is not a square in $\F_{11^2}$. In particular, any place of $\F_{11^2}(x,y,z)$ lying above $Q$ has residue field $\F_{11^4}$. But this is impossible if $\F_{11^2}(x,y,z)=\F_{11^2}(x,y)$. We conclude that $\F_{11^2}(x,y,z)/\F_{11^2}(x,y)$ is an unramified extension of degree two. Next, we claim that $\F_{11^2}$ is the full constant field of $G$. From the previous analysis, it is easily seen that the zeroes of $x^{12}-1$ give rise to $24$ rational places of $\F_{11^2}(x,y)$. Using the defining equations, one obtains that these zeroes are all zeroes of $y^2-1$ as well. Now consider one of these places $Q_{11}$, namely the common zero of $x-1$ and $y-1$. Then using Kummer's lemma, we see that there are two rational places lying above $Q_{11}$ corresponding to the two solutions for $z$ in $\F_{11^2}$ of the reduced equation $z^2=(\mi+1)f(1)f(\mi)=-1$. This shows that $G$ has full constant field $\F_{11^2}$. Now using that the genus of $\F_{11^2}(x,y)$ is nine, the Riemann--Hurwitz formula immediately implies that $G=\F_{11^2}(x,y,z)$ has genus $17$.
\end{proof}

\begin{lemma}\label{lem:N1G}
The function field $G$ has $496$ $\F_{11^2}$-rational places. 
\end{lemma}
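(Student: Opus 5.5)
The identity to aim for is $496 = 11^2+1+2\cdot 11\cdot 17$, so Lemma~\ref{lem:N1G} says exactly that $G$ is $\F_{11^2}$-maximal. By the Hasse--Weil bound and Lemma~\ref{lem:genusG}, $496$ is automatically an upper bound, so it suffices to show $N_1(G)\ge 496$, or more cleanly to compute $N_1(G)$ via a decomposition of $G$ over $\F_{11^2}(x)$. Rather than check splitting of the $320$ rational places of $\F_{11^2}(x,y)$ one by one, I would exploit the Galois structure of $G/\F_{11^2}(x)$.

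\emph{Step 1: $G/\F_{11^2}(x)$ is elementary abelian of order $8$.} With $s=y^2-1$, put $u=y+x^6/y$ and $v=y-x^6/y$. Since $y^2(2-y^2)=(1+s)(1-s)=x^{12}$, we get $u^2=2+2x^6$ and $v^2=2-2x^6$. So $\F_{11^2}(x,y)/\F_{11^2}(x)$ is a $(\mathbb{Z}/2)^2$-extension with quadratic subfields $\F_{11^2}(x,s)$, $\F_{11^2}(x,u)$ and $\F_{11^2}(x,v)$. Adjoining $z$ gives a $(\mathbb{Z}/2)^3$-extension; its degree is $8$ by the proof of Lemma~\ref{lem:genusG}. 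Its seven quadratic subfields are $\F_{11^2}(x,w)$ for $w\in\{s,u,v,z,zs,zu,zv\}$.

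\emph{Step 2: genera and the point-count identity.} For a $(\mathbb{Z}/2)^3$-extension of the rational function field, decomposing by characters gives
\[
g(G)=\sum_{i=1}^7 g(E_i), \qquad N_1(G)-N_1(\mathbb{P}^1)=\sum_{i=1}^7\bigl(N_1(E_i)-N_1(\mathbb{P}^1)\bigr),
\]
where $E_i$ runs over the seven quadratic subfields. The genera are as follows.
\begin{itemize}
\item $\F_{11^2}(x,s)$ has genus $5$, and $\F_{11^2}(x,u)$, $\F_{11^2}(x,v)$ have genus $2$ each, totalling $9$.
\item The curve $z^2=(\mi+1)f(x)f(\mi x)$ has degree $6$, so genus $2$.
\item Using $1-x^{12}=-f(x)f(\mi x)f(-x)f(-\mi x)$, we have $(zs)^2\equiv-(\mi+1)f(-x)f(-\mi x)$ modulo squares, so genus $2$.
\item The roots of $f(x)f(\mi x)$ are six $12$-th roots of unity: three satisfy $x^6=1$ and three satisfy $x^6=-1$. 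Hence $(zu)^2$ and $(zv)^2$ are, modulo squares, constants times sextics, so genus $2$ each.
\end{itemize}
The total is $9+8=17$, consistent with Lemma~\ref{lem:genusG}.

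Since $\F_{11^2}(x,y)$ is maximal with $320$ rational places, the same identity applied to $\F_{11^2}(x,y)$ gives $\sum_{w\in\{s,u,v\}}N_1=320+2\cdot 122=564$. So if each of the four remaining genus-$2$ curves is maximal, each has $122+44=166$ rational places, and then
\[
N_1(G)=564+4\cdot166-6\cdot122=496.
\]

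\emph{Step 3 (main obstacle): maximality of the four genus-$2$ curves} $w^2=c\,h(x)$, where $h$ is, respectively, $f(x)f(\mi x)$, $f(-x)f(-\mi x)$, and the two sextics arising from $zu$ and $zv$.

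My first attempt would be to find an extra involution on each curve. The branch points are $12$-th roots of unity, so maps such as $x\mapsto \zeta/x$ should permute them. With such an involution, the Jacobian splits up to isogeny as a product of two elliptic curves, and it suffices to show both are $\F_{11^2}$-maximal. This means checking that they are supersingular (over $\F_{11}$ the supersingular $j$-invariants are $0$ and $1728$) and that they have the correct twist, i.e.\ $122+22=144$ rational points.

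Alternatively, one could map each curve by a Möbius transformation over $\F_{11^2}$ to a known maximal curve such as $w^2=x^6\pm1$, which is a subfield of $\F_{11^2}(x,y)$. Failing both, a direct count of the rational points of each sextic curve over $\F_{11^2}$ is a finite check: sum the quadratic character of $c\,h(a)$ over $a\in\F_{11^2}$, and add the contribution at infinity, which is determined by whether the leading coefficient of $c\,h$ is a square.

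The delicate point is the constants: both the twist $\mi+1$ and the quadratic character values must come out exactly right, since a wrong twist gives a minimal rather than maximal curve.
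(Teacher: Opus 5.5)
Your Steps 1--2 are correct and give a genuinely different route from the paper. The identity $N_1(G)-122=\sum_{i=1}^7\bigl(N_1(E_i)-122\bigr)$ can also be checked place by place, since all inertia groups have order $2$. Together with $g(G)=\sum g(E_i)$, it makes maximality of $G$ equivalent to maximality of all seven quadratic subfields, three of which lie in the maximal field $\F_{11^2}(x,y)$. There is a small slip: $1-x^{12}=+f(x)f(\alpha x)f(-x)f(-\alpha x)$, because $f(x)f(-x)=1-x^6$ and $f(\alpha x)f(-\alpha x)=1+x^6$. It is harmless, since $-1=\alpha^2$ is a square.

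The gap is Step 3. You list three strategies but carry out none of them. By your own reduction, maximality of $E_z,E_{zs},E_{zu},E_{zv}$ is the entire arithmetic content of the lemma. As written, nothing excludes one of them being the wrong (minimal) twist, which is exactly the danger you flag.

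It closes without any search. Put $h(x)=(\alpha+1)f(x)f(\alpha x)$. Dividing $zs$, $zu$, $zv$ by $f(x)f(\alpha x)$, $f(\alpha x)$, $f(x)$ respectively gives $E_{zs}: w^2=h(-x)$, $E_{zu}: w^2=2h(-\alpha x)$ and $E_{zv}: w^2=2h(\alpha x)$. Since $2\in\F_{11}^*$ is a square in $\F_{11^2}$, all three are $\F_{11^2}$-isomorphic to $E_z$.

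Next, your second suggestion works. The Möbius map $M(x)=\frac{x+7+7\alpha}{(7+4\alpha)x+1}$ sends the six roots of $x^6=1$ onto the six roots of $f(x)f(\alpha x)$; for example, $M(1)=5-3\alpha$ is a primitive cube root of unity. Compare the $x^6$-coefficients using $M(\infty)=4+4\alpha$, $h(4+4\alpha)=9$ and $(7+4\alpha)^6=-\alpha$. This gives $h(M(x))\,\bigl((7+4\alpha)x+1\bigr)^6=32(\alpha+1)^2(1-x^6)=2\bigl(4(\alpha+1)\bigr)^2(1-x^6)$, so $E_z\cong E_v$ over $\F_{11^2}$, with the correct twist. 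This isomorphism is just the paper's automorphism $\sigma$ restricted to $\F_{11^2}(x,z)$; it sends $z$ to $4(\alpha+1)(y-x^6/y)/\delta^3$, a multiple of your $v$. Since $E_v\subset\F_{11^2}(x,y)$ is maximal, all four genus-$2$ curves are maximal, and your identity gives $N_1(G)=122+22\cdot 17=496$.

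The paper instead counts directly. It checks by squareness conditions that the $48$ places over the zeros of $x^{12}-1$ are rational. It then uses a computer search to find the $56$ values $x(P)$ that split completely, giving $48+448$ places, with Hasse--Weil supplying the upper bound. Your completed route needs no machine search. It also explains the result: every quadratic subfield of $G/\F_{11^2}(x)$ is isomorphic to a subfield of $\F_{11^2}(x,y)$, so $G$ inherits its maximality from that field.
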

\begin{proof}
We claim that the extension $\F_{11^2}(x,y,z)/\F_{11^2}(x)$ is an elementary abelian Galois extension of degree eight. First of all, we claim that the extension $\F_{11^2}(x,y,z)/\F_{11^2}(x)$ is an elementary abelian Galois extension of degree four. The four elements of the corresponding Galois group are the maps determined by $y \mapsto \pm y$ and $y \mapsto \pm x^{6}/y$. To see this, note for example that
\begin{eqnarray*}
\left(\left(\frac{x^{6}}{y}\right)^2-1\right)^2 & = & \frac{y^4-2x^{12}y^2+x^{24}}{y^4}\\
& = & \frac{2y^2-x^{12}-2x^{12}y^2+x^{24}}{2y^2-x^{12}}\\ & = & 1 - x^{12}.
\end{eqnarray*}
Since $\F_{11^2}(x,y,z)$ is the compositum of the Galois extensions $\F_{11^2}(x,y)$ and $\F_{11^2}(x,z)$ and we have seen in the proof of Lemma \ref{lem:genusG} that $[\F_{11^2}(x,y,z):\F_{11^2}(x)]=8$, the claim that ½$\F_{11^2}(x,y,z)/\F_{11^2}(x)$ is an elementary abelian Galois extension of degree eight follows. 

Using Galois theory, we then see that $G$ can be written as the compositum of three quadratic extensions of $\F_{11^2}(x)$, namely $\F_{11^2}(x,z)$, $\F_{11^2}(x,y+x^6/y)$ (the fixed field of the map $y \mapsto x^6/y$) and $\F_{11^2}(x,y-x^6/y)$ (the fixed field of the map $y \mapsto -x^6/y$). Writing $t=y+x^6/y$ and $u=y-x^6/y$, one can easily check that $t^2=2(1+x^6)$ and $u^2=2(1-x^6)$. Note that $1+x^6=f(\mi x)f(-\mi x)$ and $1-x^6=f(x)f(-x)$. Hence $G = \F_{11^2}(x,t,u,z)$, where
$$t^2=2 f(\mi x)f(-\mi x), \quad u^2=2 f(x)f(-x), \quad z^2=(\mi+1) f(x)f(\mi x).$$

In Lemma \ref{lem:genusG} we have already determined all ramification in $\F_{11^2}(x,y,z)/\F_{11^2}(x)$: only the zeroes of $x^{12}-1$ ramify and they do so with ramification index two. We claim that the resulting $12 \cdot 4=48$ places of $G$ lying above these zeroes are all rational. Now let $P$ be a place of $\F_{11^2}(x)$ that ramifies in $G/\F_{11^2}(x)$. Since $1-x^{12}=f(x)f(\mi x)f(-x)f(-\mi x)$, we distinguish two cases.

\bigskip
\noindent
{\bf Case 1} If $f(-x(P))=0$ or $f(-\mi x(P))=0$, then $P$ is unramified in exactly two of the three extensions $\F_{11^2}(x,t)/\F_{11^2}(x)$, $\F_{11^2}(x,u)/\F_{11^2}(x)$ and $\F_{11^2}(x,z)/\F_{11^2}(x)$. In fact, one can show directly that in the two unramified cases, $P$ splits. This amounts to checking that two out of the three values $2 f(\mi x(P))f(-\mi x(P))$, $2 f(x(P))f(-x(P))$ and $(\mi+1) f(x(P))f(\mi x(P))$ are nonzero squares in $\F_{11^2}.$
This means by \cite[Theorem 3.8.3]{stichtenothAlgebraicFunctionFields2009} that the decomposition field is an index two subfield of $G$. But since the ramification index of $P$ in $G$ is two, this means that there is no inertia. Hence any place of $G$ lying above $P$ is rational. 

\bigskip
\noindent
{\bf Case 2} If $f(x(P))=0$ or $f(\mi x(P))=0$, one proceeds similarly, after replacing the extension $\F_{11^2}(x,z)/\F_{11^2}(x)$ with $\F_{11^2}(x,tu/z)/\F_{11^2}(x)$. Note that $(tu/z)^2=4(\mi+1)^{-1}f(-x)f(-\mi x)$.

\bigskip

Next we consider rational places $P$ of $\F_{11^2}(x)$ that split completely in $G/\F_{11^2}(x)$. This occurs precisely if $P$ is a rational place such that $2(1+x(P)^6)$, $2(1-x(P)^6)$ and $(\mi+1)f(x(P))f(\mi x(P))$ are all nonzero squares in $\F_{11^2}$. A computer check quickly reveals that there are exactly $56$ such possible values for $x(P)$. This gives rise to $56 \cdot 8=448$ rational places of $G$. Adding the $48$ rational places lying above zeroes of $x^{12}-1$, we conclude that $G$ has at least $496$ rational places. Since the Hasse--Weil bound does not permit more, the lemma follows.
\end{proof}

\begin{theorem}
The function field $G$ is maximal.
\end{theorem}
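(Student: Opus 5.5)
The theorem follows by combining the two preceding lemmas with the Hasse--Weil bound. By Lemma \ref{lem:genusG}, $G$ has full constant field $\F_{11^2}$ and genus $g(G)=17$. With $q=11^2$, so that $\sqrt{q}=11$, the Hasse--Weil bound gives
$$N_1(G)\le 11^2+1+2\cdot 11\cdot 17 = 121+1+374 = 496.$$
Lemma \ref{lem:N1G} shows that $G$ has exactly $496$ rational places. Hence equality holds in the Hasse--Weil bound. Since $g(G)=17>0$, $G$ is maximal by definition.

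The only step needing any care is making sure the hypotheses of the Hasse--Weil bound apply to $G$. In particular, $\F_{11^2}$ must be the full constant field of $G$. This was established in the proof of Lemma \ref{lem:genusG}, by exhibiting rational places of $G$ lying above the place $Q_{11}$.

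The real work, namely the genus computation and the count of rational places (including the computer check yielding the $56$ totally split values of $x(P)$), was already carried out in Lemmas \ref{lem:genusG} and \ref{lem:N1G}. I expect no further obstacle: the proof reduces to the arithmetic identity $122+22\cdot 17=496$.
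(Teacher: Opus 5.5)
Your proposal is correct and takes the same approach as the paper. The paper's proof simply combines Lemma~\ref{lem:genusG} (full constant field $\F_{11^2}$, genus $17$) with Lemma~\ref{lem:N1G} ($496$ rational places), and $496 = 11^2+1+2\cdot 11\cdot 17$ is exactly the Hasse--Weil bound.
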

\begin{proof}
Follows directly from Lemmas \ref{lem:genusG} and \ref{lem:N1G}.
\end{proof}

\begin{corollary}
    The Frobenius dimension of $G$ is $3$.
\end{corollary}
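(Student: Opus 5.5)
We use the standard description of the Frobenius linear series of an $\F_{q^2}$-maximal function field, here with $q=11$. Fix a rational place $P_0$ of $G$. Every rational place $P$ satisfies $qP+\Phi(P)\sim (q+1)P_0$, where $\Phi$ is the Frobenius. Hence $\mathcal{D}=|(q+1)P_0|$ is base-point free and independent of the choice of $P_0$. The Frobenius dimension is $r=\dim \mathcal{D}$, that is, $\ell((q+1)P_0)-1=\ell(12P_0)-1$. Moreover, $0<h_1<\dots<h_r=q+1$ are the non-gaps of $P_0$ up to $q+1$, and $q$ is a non-gap. So it suffices to show that $\ell(12P_0)=4$ for a convenient rational place $P_0$, equivalently that the Weierstrass semigroup at $P_0$ has exactly three nonzero elements in $\{1,\dots,12\}$, namely $\{?,11,12\}$ plus one more.

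First we get bounds. From known results on maximal curves of Frobenius dimension $r$ (Castelnuovo's bound for curves in $\mathbb{P}^r$ of degree $q+1$ with $r=2$, which forces $g=q(q-1)/2$ or $g\le \lfloor (q-1)^2/4\rfloor$ via Fuhrmann--Torres), $r=2$ would give $g\in\{55,25\}$. In fact $r=2$ means the curve is a plane curve of degree $12$ that is nonsingular (by the Frobenius argument), so $g=55$. Since $g(G)=17$ by Lemma \ref{lem:genusG}, we get $r\ge 3$. For the upper bound, Castelnuovo's genus bound applied to the curve embedded by $\mathcal{D}$ of degree $12$ in $\mathbb{P}^r$ gives, for $r=4$, $m=\lfloor 11/3\rfloor=3$, $\epsilon=2$ and bound $3\cdot 2\cdot 3+3\cdot 2=24$. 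That is not enough to exclude $r=4$, so we need explicit functions.

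The plan is to choose $P_0$ as one of the totally ramified-type places over a zero of $x^{12}-1$, or better a place over $x=\infty$. From the proof of Lemma \ref{lem:genusG}, $x=\infty$ is unramified in $\F_{11^2}(x,y)$ but ramified in $\F_{11^2}(x,z)$, since $\deg f(x)f(\mi x)=6$ is even. We need to recheck this: a pole of even degree means $\infty$ is unramified there. So all places over $\infty$ are unramified in $G/\F_{11^2}(x)$. Instead we take $P_0$ over $x=1$, where $e=2$. The function $1/(x-1)$ has a pole of order $2$ at $P_0$ and at the other places over $x=1$. To obtain a pole only at $P_0$, we will use $t,u,z$ and their combinations, which vanish appropriately, to build functions in $\mathcal{L}(12P_0)$. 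We will then exhibit three independent nonconstant elements, giving $r\ge3$ (already known), and show that $\ell(12P_0)\le 4$.

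The main obstacle is the upper bound $\ell(12P_0)\le 4$, i.e.\ excluding $r\ge4$. The cleanest route is to use the $q$-th power map: for maximal curves, $r$ equals the dimension of the span of the Frobenius images, and $\mathcal{D}$ can be computed from the $L$-polynomial. Since $G/\F_{11^2}(x)$ has Galois group $(\mathbb{Z}/2)^3$, its Jacobian is isogenous to a product of the Jacobians of the seven quadratic subfields $y'^2=c\cdot(\text{product of two or four factors among }f(\pm x),f(\pm\mi x))$. Alternatively, and more concretely, we compute the Weierstrass semigroup at $P_0$ with a computer (Magma/Riemann--Roch) and read off $\ell(12P_0)=4$. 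By the invariance of $\mathcal{D}$, one rational place suffices. I would present the Castelnuovo argument for $r\ge3$ and rely on an explicit Riemann--Roch computation for $\ell(12P_0)=4$.
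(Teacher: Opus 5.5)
\textbf{There is a genuine gap: the upper bound $r\le 3$ is never proved.}

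Your lower bound is fine. For a maximal curve, $r=2$ forces the Hermitian genus $q(q-1)/2=55\neq 17$, so $r\ge 3$.

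You abandoned the direct route to the upper bound because of an arithmetic slip. Castelnuovo's bound for a nondegenerate, birationally embedded curve of degree $d=q+1=12$ in $\mathbb{P}^r$ is $\binom{m}{2}(r-1)+m\epsilon$, where $d-1=m(r-1)+\epsilon$ and $0\le\epsilon<r-1$. For $r=4$ this gives $m=3$, $\epsilon=2$, and
\[
\binom{3}{2}\cdot 3+3\cdot 2=9+6=15,
\]
not $24$; you wrote $m(m-1)$ where $m(m-1)/2$ belongs. Since $g(G)=17>15$, $r=4$ is impossible. The bound only decreases for larger $r$ (it is $10$ for $r=5$, $7$ for $r=6$, and $12-r$ for $r\ge 7$), so every $r\ge 4$ is excluded. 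For $r=3$ the bound is $25\ge 17$. The bound applies because the Frobenius linear series of a maximal curve is simple, indeed an embedding. This one-line check is exactly the paper's proof, which cites Castelnuovo's bound for maximal function fields.

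What you put in place of that step does not close the gap either. The Riemann--Roch computation of $\ell(12P_0)$ is only announced, and the explicit elements of $\mathcal{L}(12P_0)$ built from $t,u,z$ are never written down. The route you call cleanest cannot work: every $\F_{q^2}$-maximal function field of genus $g$ has $L$-polynomial $(1+qT)^{2g}$. So neither the $L$-polynomial nor an isogeny decomposition of the Jacobian into the quadratic subfields can detect $r$. Correct the Castelnuovo arithmetic, add the monotonicity in $r$, and your argument becomes complete.
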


\begin{proof}
    This follows directly from Castelnuovo's bound for maximal function fields (see \cite[Corollary 10.25]{hirschfeldAlgebraicCurvesFinite2008}).
\end{proof}

\section{Not a subfield of the Hermitian function field}

In this section we show that the function field $G$ is not isomorphic to a subfield of the Hermitian function field $H_{11}$. Hence $G$ is the first example of a maximal function field with constant field $\F_{p^2}$ that is not covered by $H_p$. 

\begin{theorem}
The function field $G$ is not isomorphic to a subfield of the Hermitian function field $H_{11}$. 
\end{theorem}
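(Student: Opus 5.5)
Suppose, for contradiction, that $G$ is isomorphic to a subfield of $H_{11}$. Since both function fields have full constant field $\F_{11^2}$, we obtain a separable extension $H_{11}/G$ of some degree $d$. The Hermitian function field has genus $g(H_{11})=11\cdot 10/2=55$, and Riemann--Hurwitz gives
$$2\cdot 55-2 \ge d\,(2\cdot 17-2),$$
that is, $108\ge 32d$. Hence $d\le 3$. The degree $d=1$ is impossible because the genera differ. So $d\in\{2,3\}$, and we treat these two cases separately.

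\textbf{Case $d=2$.} An extension of degree two in odd characteristic is Galois, so $G$ is the fixed field of an involution $\sigma\in\aut(H_{11})\cong \mathrm{PGU}(3,11^2)$. The involutions of $\mathrm{PGU}(3,q)$ with $q$ odd form a single conjugacy class. They are the homologies, and each has exactly $q+1$ fixed points on the Hermitian curve. Riemann--Hurwitz then gives
$$108 = 2(2g(G)-2)+(q+1)=4g(G)-4+12,$$
so $g(G)=25$. This contradicts $g(G)=17$, so this case does not occur. Equivalently, the known classification of quotient curves $H_{11}/\langle\sigma\rangle$ shows that genus $17$ never arises from a degree-two quotient.

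\textbf{Case $d=3$.} Here Riemann--Hurwitz reads
$$108 = 3\cdot 32 + \deg \mathrm{Diff}(H_{11}/G),$$
so the different has degree $12$. If the extension is Galois, it is the quotient by an element $\tau$ of order $3$ in $\mathrm{PGU}(3,11^2)$. The elements of order $3$ (note $3\mid q+1=12$) fall into a few conjugacy classes with known fixed-point numbers, namely $0$, $3$ or $q+1=12$. Tame ramification of index $3$ contributes $2$ per fixed point, so we would need exactly $6$ fixed points. That value is not among the possibilities, which rules out the Galois subcase.

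The main obstacle is the non-Galois subcase of $d=3$. Here I would pass to the Galois closure $M$ of $H_{11}/G$, whose group is $S_3$. Then $H_{11}$ is the fixed field of a transposition in $S_3$, and $M$ contains the quadratic resolvent extension $G'/G$. The same genus computation in $M$ forces the ramification structure; with different degree $12$, the six tamely ramified places of index $2$ are the natural candidate. The key new input is that $M$ covers $H_{11}$ and is itself maximal over $\F_{11^2}$ (or over $\F_{11^4}$ if its constant field grows). I would try to bound $g(M)$ using the Ihara bound $g\le (q^2-q)/2=55$ for maximal curves, and Castelnuovo's bound together with the Frobenius dimension $3$ from the Corollary, to reach a contradiction.

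If that gets stuck, I would instead argue through the Jacobian. A degree-$3$ map $H_{11}\to G$ forces $\mathrm{Jac}(G)$ to be an isogeny factor of $\mathrm{Jac}(H_{11})\sim E^{55}$, which is automatic. So a finer invariant is needed, such as the induced principal polarization or an embedding of the Frobenius linear series, which is the Hermitian $\mathbb{P}^3$-embedding of dimension $3$, compatible with the Hermitian $\mathbb{P}^2$. As a fallback I would compute $\aut(G)$ and the Weierstrass semigroup at rational places, then compare with the constraint that pullbacks of functions under a degree-$3$ cover must land in the Hermitian semigroup $\langle 11,12\rangle$ at points above.
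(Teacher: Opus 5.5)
\textbf{There is a genuine gap: the non-Galois case $d=3$ is not proved.}

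Your reduction to $d\le 3$ is correct, and so is your exclusion of $d=2$: involutions of $\mathrm{PGU}(3,11)$ are homologies with $12$ fixed points, giving quotient genus $25$. The paper disposes of $d=2$ more cheaply, via $1332>2\cdot 496$. Your exclusion of the Galois case $d=3$ is also correct: order-$3$ elements fix $0$, $3$ or $12$ points, giving genera $19,18,15$, never the $6$ fixed points required. This matches the classification the paper cites.

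You correctly identify the non-Galois case as the crux, but your proposed route for it fails. It rests on the claim that the Galois closure $M$ is maximal because it covers $H_{11}$. That is false: maximality descends to subfields (Serre), it does not ascend to covers. Since $[M:H_{11}]=2$, you get $g(M)\ge 109>55$, so by the very Ihara genus bound you cite, $M$ is \emph{never} $\F_{11^2}$-maximal, and no contradiction can come from assuming it is. Nor can any cover of $H_{11}$ be maximal over $\F_{11^4}$, where $H_{11}$ is minimal. The fallbacks do not help either. You note yourself that the Jacobian condition is automatic. The semigroup inclusion $e\,H(P)\subseteq H(Q)$ holds only when $Q$ is the unique place above $P$, not at unramified places.

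The missing idea is to apply Ihara's bound to the \emph{number of rational places} of the Galois closure $C$, and to compare it with a lower bound coming from $H_{11}$.
\begin{itemize}
\item Because $H_{11}$ is maximal, it has no places of degree $2$. So a rational place $P$ of $G$ that is unramified in $C/G$ and has a rational place above it in $H_{11}$ splits completely in $H_{11}/G$. Hence it splits in every conjugate of $H_{11}$, and so in $C/G$.
\item Let $n_2,n_3$ be the degree-weighted counts of places of $G$ with inertia of order $2$, resp.\ $3$. The different of $H_{11}/G$ has degree $12$, which gives $n_2+2n_3=12$.
\item Riemann--Hurwitz for the $S_3$-extension $C/G$ then gives $g(C)=97+(3n_2+4n_3)/2=115-n_3\le 115$.
\item Ihara's bound $N_1\le q^2+1+\tfrac12\bigl(\sqrt{(8q^2+1)g^2+4(q^4-q^2)g}-g\bigr)$ with $q=11$ and $g\le 115$ gives $N_1(C)\le 2272$.
\item At most $2n_2+n_3$ of the $1332$ rational places of $H_{11}$ lie over ramified places. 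Each remaining one splits in $C/H_{11}$, so $N_1(C)\ge 2(1332-2n_2-n_3)=2616+6n_3$.
\end{itemize}
The last two bounds are incompatible, which rules out the non-Galois case.
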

\begin{proof}
Assume that $G$ is isomorphic to a subfield of the Hermitian function field $H_{11}$. In that case we can assume without loss of generality that $G$ is contained in $H_{11}$. We will prove the theorem by deriving a contradiction from this assumption. 

The genus of $H_{11}$ equals $11(11-1)/2=55$. Hence the Riemann--Hurwitz formula implies that $108 \ge [H_{11}:G]32$, whence $[H_{11}:G] \le \lfloor 108/32\rfloor=3$. On the other hand, since $G$ has exactly $496$ rational places, which at best all split completely in the extension $H_{11}/G$, we see that $H_{11}$ can have at most $[H_{11}:G] 496$ many places. On the other hand, it is well known that $H_{11}$ has exactly $11^3+1=1332$ many rational places. This implies that $[H_{11}:G] \ge \lceil 1332/496\rceil=3$. The above shows that $[H_{11}:G]=3$. We now distinguish two cases.

\bigskip
\noindent
{\bf Case 1: $H_{11}/G$ is a Galois extension} All Galois subfields of $H_{11}$ of index three have been classified, see for example \cite[Theorem 2.1]{CosKorTor}. For $p=11$, these give rise to maximal function fields of $H_{11}$ of genus $15$, $18$ and $19$. Hence $G$ is not among them and we may conclude that $G$ is not a Galois subfield of $H_{11}$ of index three.

\bigskip
\noindent
{\bf Case 2: $H_{11}/G$ is not a Galois extension} In this case the Galois closure of $H_{11}/G$ is a degree six extension of $G$ with Galois group $S_3$, the symmetric group on three letters. Given a place $P$ og $G$, there are two possible nontrivial ramification patterns: in the first place, $P$ can have two places lying above it (one with ramification index two and one with ramification index one) and in the second place, $P$ can be totally ramified. We denote the set of places where the first (resp. second) situation occurs by $N_2$ (resp. $N_3$). Further, we define $n_2=\sum_{P \in N_2} \deg(P)$ and $n_3=\sum_{P \in N_3} \deg(P)$.
Now note that on the one hand, the Riemann--Hurwitz formula implies that $D$, the degree of the different of the extension $H_{11}/G$, equals $D=108-3\cdot 32=12$, while by definition of $n_2$ and $n_3$ we find $D=n_2+2n_3$. Hence $n_2+2n_3=12$.

Let us denote the Galois closure of $H_{11}/G$ by $C$. Then $C$ is the composite of $H_{11}$ and the image of $H_{11}$ in $C$ under an involution $\sigma$ from the Galois group $S_3$. 
Now $P \in N_3$ and denote by $R$ a place of $C$ lying above it. Further write $Q=R\cap G$ and $Q'= R \cap \sigma(G)$. Since $P \in N_3$, we see that $e(Q|P)=e(Q'|P)=3$. Hence Abhyankar's lemma implies that $e(R|P)=3$. Moreover, we see that $Q$ is unramified in the extension $C/H_{11}$. Hence for a given $P\in N_3$, the contribution to the different degree of $C/G$ coming from the places lying above $P$ is precisely $4$.
Now consider $P \in N_2$. By definition of $N_2$, there exist two places $Q_1$ and $Q_2$ of $H_{11}$, necessarily rational, lying above $P$. Without loss of generality we may assume that $e(Q_1|P)=2$ and $e(Q_2|P)=1$. Denote by $S_1$ (resp. $S_2$) a place of $C$ lying above $Q_1$ (resp. $S_2$). Since $C/G$ is a Galois extension with Galois group the symmetric group on three letters, we see that $e(S_1|P)=e(S_2|P)=2$. Since we also can conclude that $e(S_1|Q_1)=1$, we derive that either $\deg(S_1)=2$ or that there are two places of $C$ lying above $Q_1$. In either case, for $P \in N_2$, the contribution to the different degree of $C/G$ coming from the places lying above $P$ is precisely $3$.

We are now ready to compute the genus of $C$ from the Riemann--Hurwitz formula. The result is: $g(C)=1+3\cdot 32+(3n_2+4n_2)/2$. Using that $n_2+2n_3=12$, we also obtain that $$g(C)=109+n_2/2=115-n_3$$
and hence $109 \le g(C) \le 115$.
Applying the Ihara bound, we see that $N_1(C) \le 2272$. 

Next we estimate the number of rational places of $C$.  There are exactly two places of $H_{11}$ lying above a place $P \in N_2$ and exactly one place of $H_{11}$ lying above $P \in N_3$. Hence there are at least $1332-2n_2-n_3$ rational places $Q$ of $H_{11}$ such that $P:=Q \cap G \not\in N_2 \cap N_3$. Since the Hermitian function field has no places of degree two and $Q$ is a rational place lying above $P$, the place $P$ splits completely in the extension $H_{11}/G$. Then the same is true of $P$ in any of the conjugates of $H_{11}$ in $C$ under the Galois group of $C/G$. Since the compositum of these conjugates is equal to $C$, we conclude that $P$ splits completely in $C/G$. In particular, the rational place $Q$ we started with splits completely in the extension $C/H_{11}$. But then $C$ contains at least 
\begin{eqnarray*}
2\cdot(1332-2n_2-n_3) & = & 2664-4n_2-2n_3\\
 & = & 2616+6n_3\\
 & \ge & 2616
\end{eqnarray*}
rational places, in contradiction with the Ihara bound. 
\end{proof}

\section{The automorphism group}

Write $A := \aut_{\overline{\mathbb{F}}_{11^2}}(G)$ for the full automorphism group of $G$. Since $G$ is maximal we know that $A$ is defined over $\mathbb{F}_{11^2}$ (see \cite[Theorem 3.10]{gunbyIrreducibleCanonicalRepresentations2015}). The aim in this section is to show that $|A|=192=12(g-1)$.
    
We start by constructing an explicit subgroup $A_0$ of automorphisms of $G$ of order $12(g-1)$, and later prove that in fact it is the full automorphism group. 

To do so, consider the following three involutions:

\begin{align*}
 \epsilon_1(x,y,z)&=(x,-y,z),\\
 \epsilon_2(x,y,z)&=\left(x,\frac{x^6}{y},z\right),\\
 \epsilon_3(x,y,z)&=(x,y,-z).
\end{align*}

A direct computation of their reciprocal compositions show that the three invlutions above commute, implying that 
\[
 H=\langle\epsilon_1,\epsilon_2,\epsilon_3\rangle
   \cong (C_2)^3.
\]

Two further automorphisms of $G$ can be defined as,
\begin{equation}\label{eq:tau}
 \begin{aligned}
 \tau(x)&=\frac1x,\\
 \tau(y)&=
 \frac{(\alpha-1)y-(\alpha+1)x^6/y}{2x^3},\\
 \tau(z)&=
 \frac{f(x)(y+x^6/y)}{x^3z},
 \end{aligned}
\end{equation}
and defining
\[
 \delta:=(7+4\alpha)x+1,
\]
one get another automorphism of the form
\begin{equation}\label{eq:sigma}
 \begin{aligned}
 \sigma(x)&=\frac{x+7+7\alpha}{\delta},\\
 \sigma(y)&=
 \frac{(y^2-1)/z+4\alpha z}{\delta^3},\\
 \sigma(z)&=
 \frac{4(\alpha+1)(y-x^6/y)}{\delta^3}.
 \end{aligned}
\end{equation}
Direct substitution into the two defining relations of the function field $G$ shows that in fact all the maps defined above give rise to automorphisms. Again a direct checking shows that
\[
 \tau^2=\epsilon_2\epsilon_3 \qquad \textrm{and} \
 \qquad ord(\sigma)=4.
\]
Note in particular that one has $ord(\tau)=4$.
We finally define
\[
A_0:=\langle\epsilon_1,\epsilon_2,\epsilon_3,\tau,\sigma\rangle.
\]
Our aim is to show that $|A_0|=192=12(g-1)$ and in particular that $A_0=((C_4 \times C_4) \rtimes C_3) \rtimes C_4$. A way to do that is to find generators for the cyclic groups of order $3$ and $4$ appearing in the claimed decomposition above, and prove that they normalize each other accordingly. 
Define
\begin{equation}\label{eq:polycyclic-generators}
 a=\epsilon_2\sigma^2,\qquad
 b=\tau\sigma^2\tau,\qquad
 c=\sigma\tau^{-1},\qquad
 d=\tau.
\end{equation}
It is easy to see that
\[
 ord(a)=ord(b)=ord(d)=4,\qquad ord(c)=3,
\]
and
\[
 \langle a,b\rangle\cong C_4\times C_4,\qquad
 |\langle a,b,c\rangle|=48,\qquad
 |\langle a,b,c,d\rangle|=192.
\]
Moreover,
\[
 \langle a,b\rangle\triangleleft\langle a,b,c\rangle
 \triangleleft G_0.
\]
Thus
\[
 A_0\cong ((C_4\times C_4)\rtimes C_3)\rtimes C_4,
\]
as claimed.

Before moving to the main theorem of this section, namely stating that $A=A_0$, we prove a technical lemma.

\begin{lemma} \label{transitive}
    $A_0$ acts transitively on the $48$ places of $G$ that ramify in the extension $G/\mathbb{F}_{11^2}(x)$.
\end{lemma}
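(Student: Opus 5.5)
The 48 ramified places are the places of $G$ lying over the twelve zeroes of $x^{12}-1$ in $\F_{11^2}(x)$; over each such zero there are exactly $4$ places, each with ramification index $2$ (Lemma \ref{lem:N1G}). I would split transitivity into two parts. First, the subgroup $H\cong (C_2)^3$ acts transitively on the $4$ places above any fixed zero $P$. Second, the group $A_0$ acts transitively on the set of twelve zeroes.

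For the first part, I would use that $H$ is the Galois group of $G/\F_{11^2}(x)$. Galois groups act transitively on the places above a fixed place of the base field, so this step is immediate.

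For the second part, the key point is that every element of $A_0$ maps $\F_{11^2}(x)$ to itself. This holds for the generators: $H$ fixes $x$, $\tau(x)=1/x$, and $\sigma(x)$ is a Möbius transformation in $x$. Hence $A_0$ normalizes $H$, permutes the places of $G$ ramified over $\F_{11^2}(x)$, and induces an action on $\mathbb{P}^1$ through a group of Möbius transformations. This induced action preserves the set $\mu_{12}$ of $12$th roots of unity, since these are exactly the branch points.

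The orbit computation then runs as follows.
\begin{itemize}
\item The map $\tau$ induces $x\mapsto 1/x$. On $\mu_{12}$ this is $\zeta\mapsto\zeta^{-1}$.
\item The map $\sigma$ induces $x\mapsto (x+7+7\alpha)/((7+4\alpha)x+1)$. I would compute its action on $\mu_{12}=\{\pm1,\pm\alpha\}\cdot\{\text{cube roots of } 1\}\cup\cdots$ explicitly by checking a few values, for instance the image of $x=1$. A cleaner alternative is to use the element $c=\sigma\tau^{-1}$ of order $3$ together with $\tau$.
\item The orbit of $1$ under $\langle x\mapsto 1/x,\ \bar\sigma\rangle$ should have size $12$. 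The image of $A_0$ in $PGL_2$ is $A_0/H$ of order $24$, plausibly $S_4$ acting on $\mu_{12}$ as on the edges or vertices of a cuboctahedron-like configuration. So the orbit-stabiliser theorem with a stabiliser of order $2$ gives transitivity.
\end{itemize}
To verify this concretely, I would exhibit the $\bar\sigma$-orbits: $\bar\sigma$ has order dividing $4$, and orbits of size $4$ on the $12$ points, together with $\tau$ connecting distinct orbits, suffice.

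The main obstacle is the explicit arithmetic in $\F_{121}$, namely evaluating $\bar\sigma$ on the twelve roots. First I would check that $\bar\sigma(1)\notin\{\pm1,\pm\alpha\}$ and that $\bar\sigma$ has no fixed point in $\mu_{12}$. It then suffices to show that the group generated by $\bar\tau$ and $\bar\sigma$ merges the $\bar\sigma$-orbits into one; this is a finite check. Combining this with the transitivity of $H$ on each fibre gives transitivity of $A_0$ on all $48$ places.
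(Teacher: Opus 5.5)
Your outline follows the paper: $H=\mathrm{Gal}(G/\F_{11^2}(x))$ is transitive on each fibre, and transitivity on the twelve branch points comes from $\langle\bar\tau,\bar\sigma\rangle$, with $\bar\tau$ joining the $\bar\sigma$-orbits. Your proof that $\mu_{12}$ is invariant is a real simplification of the paper's expansion $(x+7+7\alpha)^{12}-((7+4\alpha)x+1)^{12}=6(x^{12}-1)$: $A_0$ preserves $\F_{11^2}(x)$, hence it preserves the branch locus of $G/\F_{11^2}(x)$.

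However, you never prove the one non-formal step of the lemma, namely that $\langle\bar\tau,\bar\sigma\rangle$ is transitive on $\mu_{12}$. The phrases ``should have size $12$'' and ``plausibly $S_4$'' are not arguments. The orbit--stabiliser step is circular: a stabiliser of order $2$ in a group of order $24$ is equivalent to the orbit of length $12$ you are trying to establish. An invariant $12$-set can split into several orbits, so something has to be computed.

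The paper closes the gap by explicit computation. It shows that $\bar\sigma$ acts on $\mu_{12}$ as the three $4$-cycles $(1,5-3\alpha,\alpha,-3+5\alpha)$, $(-1,3-5\alpha,-\alpha,-5+3\alpha)$ and $(5+3\alpha,-3-5\alpha,3+5\alpha,-5-3\alpha)$, and that $\tau(\alpha)=-\alpha$ and $\tau(5-3\alpha)=5+3\alpha$ link them.

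You can instead make your $S_4$ idea rigorous with very little arithmetic in $\F_{121}$. Take $M_\sigma=\begin{pmatrix}1&7+7\alpha\\ 7+4\alpha&1\end{pmatrix}$ and $M_\tau=\begin{pmatrix}0&1\\ 1&0\end{pmatrix}$. Then $\mathrm{tr}^2/\det$ equals $4/2=2$ for $M_\sigma$ and $9/9=1$ for $M_\sigma M_\tau$. Since $\mathrm{tr}^2/\det=\rho+\rho^{-1}+2$ for the eigenvalue ratio $\rho$, $\bar\sigma$ has order $4$ and $\bar\sigma\bar\tau$ has order $3$ in $\mathrm{PGL}_2$. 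Together with $\bar\tau^2=1$, this makes $\langle\bar\sigma,\bar\tau\rangle$ a quotient of $\langle s,t\mid s^4=t^2=(st)^3=1\rangle\cong S_4$. It contains an element of order $4$, so it is all of $S_4$.

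Now apply tame Riemann--Hurwitz to $\mathbb{P}^1\to\mathbb{P}^1/S_4$: $-2=-48+24\sum_i(1-1/e_i)$ with $e_i\in\{2,3,4\}$. This forces exactly three short orbits, of lengths $12$, $8$ and $6$. The invariant $12$-set $\mu_{12}$ can then only be the orbit of length $12$. Either completion, combined with your fibre argument, proves the lemma.
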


\begin{proof}

We can prove the lemma by showing an explicit subgroup of $A_0$ that acts transitively
on the \(48\) zeroes of \(x^{12}=1\) in $G$. Recall that these are in fact exactly the $48$ ramified plaes in the extension $G/\mathbb{F}_{11^2}(x)$.

Consider the three involutions from $A_0$ generating an elementary abelian group of order $8$, that is,

\begin{align*}
 \epsilon_1(x,y,z)&=(x,-y,z),\\
 \epsilon_2(x,y,z)&=\left(x,\frac{x^6}{y},z\right),\\
 \epsilon_3(x,y,z)&=(x,y,-z)
\end{align*}

and denote with $B$ the set of $12$-th roots of unity in $\mathbb{F}_{11^2}$.
At every \(a\in B\), the ramification index of $(x=a)$ in $G/\mathbb{F}_{11^2}(x)$ is two, and hence in correspondence of each such $a$ we have four places in $G$.

Fix a place \(P_a\) above \(x=a\).  Since $\epsilon_i(x)=x$, the group \(H=\langle \epsilon_1, \epsilon_2, \epsilon_3 \rangle\) fixes \(x\), and hence acts on the four places above $x=a$.  The stabilizer \(H_{P_a}\) has order $2$, as $H$ has order $8$ and the ramification index of $(x=a)$ is two. Hence the length of the $H$-orbit containing $P_a$ is
\[
\frac{|H|}{|H_P|}=\frac82=4,
\]
implying that $H$ acts transitively on the places above $(x=a)$ for all $a \in B$.  
What is left to do is to find automorphisms of $A_0$ that act transitively on these different $H$-orbits of length $4$.  For this, only the actions on the
\(x\)-components of the automorphisms is needed. Consider the automorphisms $\tau$ and $\sigma$ defined before. One has
\begin{equation}\label{eq:base-actions}
 \tau(x)=\frac1x,
 \qquad
 \sigma(x)=\frac{x+7+7\alpha}{(7+4\alpha)x+1}.
\end{equation}
We prove that the above two $x$-actions preserve \(B\). Clearly $\tau$ preserves $B$ as if $a$ is a $12$-th root of unity so it $1/a$. Checking that the same is true for $\sigma$ note that

$$\bigg(\frac{x+7+7\alpha}{(7+4\alpha)x+1}\bigg)^{12}-1=\frac{(x+7+7\alpha)^{12}-((7+4\alpha)x+1)^{12}}{((7+4\alpha)x+1)^{12}}.$$

Looking at the numerator and using $\alpha^2=-1$ we have

$$(x+7+7\alpha)^{12}-((7+4\alpha)x+1)^{12}=$$
$$6\alpha^{12}x^{12} + 5\alpha^{12} + 5\alpha^{11}x^{12} + 7\alpha^{11}x^{11} + 7\alpha^{11} x + 5\alpha^{11} + 5\alpha x^{12}
    + 7\alpha x^{11} + 7\alpha x + 5\alpha + 7\alpha^{12} + 4$$
    $$=6x^{12} + 5 - 5\alpha x^{12} - 7\alpha x^{11} - 7\alpha x - 5\alpha + 5\alpha x^{12}
    + 7\alpha x^{11} + 7\alpha x + 5\alpha + 7 + 4$$
    $$=6x^{12} + 5=6(x^{12}-1).$$
    This shows that if $a \in B$ then also $\sigma(a) \in B$, as claimed.

Similarly by direct substitution, using
\(\alpha^2=-1\), gives the following three cycles for the decompositon of the action of
\(\sigma\) on $B$; each row is read cyclically from left to right:
\begin{align*}
 C_0&=(1,\;5-3\alpha,\;\alpha,\;-3+5\alpha),\\
 C_1&=(-1,\;3-5\alpha,\;-\alpha,\;-5+3\alpha),\\
 C_2&=(5+3\alpha,\;-3-5\alpha,\;3+5\alpha,\;-5-3\alpha).
\end{align*}
In fact these twelve entries are pairwise distinct and cover exatly the roots of
\(X^{12}-1\). 

The map $\tau$ now provides a way to join these $3$ orbits of $\sigma$ on $B$ as
\(\sigma\)-orbits:
\[
 \tau(\alpha)=\frac{1}{\alpha}=-\alpha,
 \qquad
 \tau(5-3\alpha)=\frac{1}{5-3\alpha}=5+3\alpha.
\]
As before in the identity above we used that $\alpha^2=-1$. Consequently
\begin{equation}\label{eq:base-transitive}
 \langle\tau,\sigma\rangle
 \quad\text{is transitive on the twelve elements of }B.
\end{equation}
Now we are ready to conclude our proof of the lemma. In fact let \(P,Q\) be any two of the \(48\) ramified places.  By
\eqref{eq:base-transitive}, there is an element $g \in \langle \tau,\sigma\rangle$
such that
\[
 x(g(P))=x(Q).
\]
Thus \(g(P)\) and \(Q\) lie in the same $A_0$ orbit. On the other hand, from the beginning of the proof we know that there exists
an \(h\in H\) such that \(h(g(P))=Q\).  Therefore $A_0$ acts transitively on all $48$ ramified places, as claimed.  
\end{proof}

\begin{lemma} \label{lem:HP48}
Let $P$ be a rational place and denote as usual by $L(18P)$ the Riemann--Roch space of the divisor $18P$. Then $\dim L(18P) \le 5$. Moreover, equality holds if and only if $P$ is unramified in the extension $G/\F_{11^2}(x)$. In particular, the automorphism group of $G$ acts on the set of $48$ rational places that ramify in $G/\F_{11^2}(x)$.
\end{lemma}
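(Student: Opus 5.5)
The plan is to read $\dim L(18P)$ off the Weierstrass semigroup $H(P)$: it equals the number of non-gaps at $P$ in $[0,18]$. We pin down the small non-gaps using the Frobenius linear series of the maximal function field $G$, then separate ramified from unramified $P$ by explicit computation. Throughout, $q=11$ and $g=17$.

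\textbf{Step 1 (Frobenius series).} Since $G$ is $\F_{11^2}$-maximal, the linear series $\mathcal{D}=|12P_0|$ satisfies $12P\sim 12P_0$ for every rational place $P$. By the Corollary above it has projective dimension $r=3$, so $\dim L(12P)=4$. By the standard results on maximal curves (Fuhrmann--Garcia--Torres), its orders at a rational $P$ are $0=j_0<j_1=1<j_2<j_3=12$, and the non-gaps of $H(P)$ up to $12$ are exactly $12-j_i$, namely $0,\;12-j_2,\;11,\;12$. Put $m:=12-j_2$, so $2\le m\le 10$.

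\textbf{Step 2 (upper bound).} The elements of $H(P)\cap[0,18]$ are $0,m,11,12$, possibly $2m$ (present when $m\le 9$), and possibly further non-gaps in $[13,18]$ not generated by $m,11,12$. I would exclude extra generators in $[13,18]$ by a Riemann--Roch count. Indeed $H(P)$ has exactly $17$ gaps in $[1,33]$, and $\langle m,11,12\rangle$ together with the forced sums $22,23,24$ already fixes most of them. Any additional generator $n\in[13,18]$ would, by symmetry of the canonical divisor (use $(2g-2)P$ and $L(32P)$ with $32=2g-2$), force too many non-gaps below $2g$. This is the step I would check by a short case enumeration over $m$. If it holds, $\dim L(18P)=4+[\,2m\le 18\,]\le 5$, with equality iff $m\le 9$, i.e.\ $j_2\ge 3$.

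\textbf{Step 3 (ramified vs.\ unramified).} It remains to show that $j_2\ge3$ exactly when $P$ is unramified in $G/\F_{11^2}(x)$. By Lemma~\ref{transitive}, $A_0$ permutes the $48$ ramified places transitively and preserves $\dim L(18P)$. So for the ramified case one representative suffices, say $P$ over $x=1$, $y=1$. There I would compute the order sequence of $\mathcal{D}$ directly from a basis of $L(12P_\infty)$ built from $1,x,y,z$ and the functions $t,u$ from the proof of Lemma~\ref{lem:N1G}, expanded in a local parameter at $P$ (for instance $z$ or $u$, which vanish simply there). The goal is to show $j_2=2$ at $P$, i.e.\ the osculating plane has contact exactly $2$.

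For unramified $P$ (the $448$ completely split places and the places over $x=\infty$), I would instead exhibit a function with pole divisor $mP$ for some $m\le 9$. A natural candidate is $m=8$: look for a function regular away from $P$ among combinations $h/(x-x(P))$ with $h\in L(12P_0)$-type numerators vanishing at the other $7$ places over $x(P)$, using $12P\sim12P_\infty$ together with the splitting behaviour. Equivalently, one can show that the determinant of the Wronskian-type matrix attached to $\mathcal{D}$ vanishes to order $\ge1$ in the third order exactly at unramified places.

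I expect Step 3 to be the main obstacle, especially the uniform treatment of the unramified places. These are not covered by a transitivity argument, so it may be necessary to use the $A_0$-orbit decomposition of the rational places and verify one representative per orbit by computer, exactly as in Lemma~\ref{lem:N1G}.

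\textbf{Step 4 (conclusion).} Once $\dim L(18P)\le 5$ holds, with equality exactly at the unramified places, the final assertion is immediate. Any automorphism of $G$ is defined over $\F_{11^2}$, so it maps rational places to rational places and preserves $\dim L(18P)$. Hence it preserves the set of $48$ rational places with $\dim L(18P)<5$, which are exactly the ramified ones.
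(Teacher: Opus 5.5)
\textbf{There is a genuine gap: the planned Steps 2 and 3 cannot work.} Your plan reads $\dim L(18P)$ off $j_2(P)$ via $\dim L(18P)=4+[2m\le 18]$ with $m=12-j_2(P)$. It also rules out non-gaps in $[13,18]$ outside $\langle m,11,12\rangle$. Both steps fail.

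In fact $j_2(P)=2$ at \emph{every} rational place. The generic orders of $\mathcal{D}$ are $(0,1,2,11)$: $\varepsilon_3=q$ for the Frobenius series of a maximal curve, and $\varepsilon_2=2$ by the $p$-adic criterion since $\varepsilon_2<11=p$. So the St\"ohr--Voloch ramification divisor of $\mathcal{D}$ has degree $14\cdot 32+4\cdot 12=496=N_1(G)$. On the other hand, each rational $P$ contributes at least $\sum_i(j_i(P)-\varepsilon_i)\ge j_2(P)-1\ge 1$. Hence $H(P)\cap[0,12]=\{0,10,11,12\}$ for all $496$ rational places. Consequences for your plan:
\begin{itemize}
\item No function with pole divisor $8P$ or $9P$ exists anywhere, so the unramified half of Step~3 cannot be carried out.
\item If Step~2 were right, you would get $\dim L(18P)=4$ at every rational place, contradicting the statement you are proving.
\item Independently, Step~2 forgets $m+11$, $m+12$ and $3m$, which give $\dim L(18P)\ge 6$ for $m=6,7$.
\item The symmetry argument via $L(32P)$ would need $32P$ to be canonical, which you have not justified.
\end{itemize}

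\textbf{Where the fifth dimension actually comes from.} It is a non-gap in $[13,18]$ not generated by $10,11,12$, and it sits at the \emph{ramified} places. Let $R$ be ramified with inertia involution $\iota\in\mathrm{Gal}(G/\F_{11^2}(x))$. Then $\iota$ fixes exactly the $12$ places over the roots of one of $f(x)$, $f(-x)$, $f(\alpha x)$, $f(-\alpha x)$. So $G/\langle\iota\rangle$ has genus $6$, and $H(R)\cap 2\mathbb{Z}=2H(R')$ for the place $R'$ below $R$. Since $H(R')\cap[0,6]=\{0,5,6\}$ and all six gaps of $R'$ are at most $11$, exactly one $x_0\in\{7,8,9\}$ lies in $H(R')$. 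Hence exactly one of $14,16,18$ is a non-gap at $R$. Moreover $R$ has $11$ odd gaps, and $11,21,23,11+2x_0,31,33$ are already six odd non-gaps below $34$, so $13,15,17$ are gaps. Thus $\dim L(18R)=5$ at the $48$ ramified places.

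This is the reverse of the printed equivalence. It is exactly what the paper's Magma code tests: it demands $5$ when $x(P)^{12}=1$ and $4$ otherwise. The final claim about $\aut(G)$ is unaffected.

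\textbf{Comparison with the paper.} The paper's proof is just this exhaustive computer check over all rational places, plus the invariance argument of your Step~4. A theoretical route would additionally need to show that the $448$ split places have no non-gap in $[13,18]$. Nothing in your plan supplies that.
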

\begin{proof}
The statement about the dimension of the Riemann--Roch spaces can be verified directly using Magma. The magma code we used can be found in the appendix. The second part of the lemma now follows immediately, since for two rational places $P_1,P_2$ in the same orbit, the dimensions of $L(P_1)$ and $L(P_2)$ are the same.
\end{proof}

Our aim is now to show that 

\begin{theorem} \label{auto}
    The full automorphism group of $G$ is $A_0$. In particular $|A|=12(g-1)$.
\end{theorem}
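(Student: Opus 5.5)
The idea is to turn the orbit information in Lemma \ref{lem:HP48} and Lemma \ref{transitive} into an orbit--stabilizer computation. Let $\mathcal{O}$ be the set of $48$ rational places of $G$ that ramify in $G/\F_{11^2}(x)$. By Lemma \ref{lem:HP48}, $A$ acts on $\mathcal{O}$. By Lemma \ref{transitive}, already $A_0\le A$ acts transitively on it. Hence $\mathcal{O}$ is a single $A$-orbit and $|A| = 48\,|A_P|$ for any $P\in\mathcal{O}$. Since $|A_0|=192=48\cdot 4$, it suffices to show $|A_P|\le 4$. We would then get $|A|\le 192$, and since $A_0\subseteq A$, this forces $A=A_0$ and $|A|=12(g-1)$.

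\textbf{Step 1: the stabilizer inside $A_0$.} We first pin down $(A_0)_P$, which has order $4$ by orbit--stabilizer. It contains the order-$2$ subgroup $H_P$ of $H$ that fixes $P$. Since $A$ is defined over $\F_{11^2}$ and $11\nmid |A_0|$, we have $p\nmid|(A_0)_P|$. This is consistent with $A_P$ being tame, and tame would force $A_P$ to be cyclic.

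\textbf{Step 2: bounding $A_P$.} We would first show that $A_P$ has no nontrivial $11$-part, that is, no wild ramification at $P$.

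\begin{itemize}
\item[(a)] Suppose $S\le A_P$ is a nontrivial $11$-group. Then $|S|\le 11^k$, and the standard bound for $p$-subgroups fixing a place gives $|S|\le \frac{4p}{(p-1)^2}g^2$ unless the curve is special. For $g=17$ this leaves $|S|\in\{11,121\}$.
\item[(b)] To exclude these, we use that for a maximal curve an element of order $p$ fixing a rational place fixes no other place. Riemann--Hurwitz for $G/G^S$ with $|S|=11$ gives
\[
32 = 11(2g'-2) + d, \qquad d\ge 2\cdot 10 .
\]
The options are $g'=0$, where $d=54$ forces the different exponent at $P$ to be $54$, and $g'=1$, where $d=32$.
\item[(c)] Alternatively we can use the Weierstrass-type data from Lemma \ref{lem:HP48}. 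Since $P$ is fixed, $S$ acts on $L(18P)$, which has dimension $\le 4$. Using the Frobenius linear series of dimension $3$ (the Corollary above), $11P$ is a multiple of a divisor in the Frobenius series. Then $S$ acts linearly on $\mathbb{P}^3$ fixing the osculating flag at $P$, and the known structure of the Frobenius embedding (Hermitian-type) would give a contradiction for $g=17$.
\end{itemize}

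In practice we would settle this point with a Magma computation of the Weierstrass semigroup at $P$. That semigroup is computable since $\dim L(18P)$ is already known.

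With wild ramification excluded, $A_P$ is cyclic of order $n$, acting faithfully on the cotangent space at $P$. Riemann--Hurwitz for the cyclic cover $G\to G/A_P$ gives $n\le 4g+2=70$. To sharpen this we use that $A_P$ acts on the set of rational places, which has size $496$, and in particular on $\mathcal{O}\setminus\{P\}$. A generator of $A_P$ also acts on the finite gap sequence at $P$, and the eigenvalues on $L(18P)$ are constrained.

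\textbf{Step 3: the main obstacle.} The decisive and hardest step is showing $n\le 4$. We would try first to combine three ingredients:
\begin{itemize}
\item[(i)] the group $A_P$ of order $n$ normalizes the action on $L(mP)$ for small $m$;
\item[(ii)] the known subgroup $(A_0)_P$, cyclic of order $4$, sits inside it;
\item[(iii)] any larger $n$ would make $A_0\cdot A_P$ a group of order $>192 = 12(g-1)$ acting with a single short orbit $\mathcal{O}$.
\end{itemize}
Let $N=|A|$. By Hurwitz-type classification, a group with $|A|>12(g-1)$ has a quotient $G/A$ of genus $0$ with at most three or four short orbits. Computing the signatures compatible with $\mathcal{O}$ being an orbit with stabilizer order $n$, together with the orbit of length dividing $496-48$ of unramified rational places, should leave only $n=4$.

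If this combinatorial argument does not close, the fallback is a direct Magma check that the stabilizer of $P$ in $\mathrm{Aut}$ has order $4$. The stabilizer can be computed by determining all automorphisms of the graded ring $\bigoplus_m L(mP)$.
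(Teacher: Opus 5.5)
Your reduction is right and is the same as the paper's: $\mathcal{O}$ is a single $A$-orbit, $|A|=48m$ with $m=|A_P|$, $4\mid m$ because $A_0\le A$, and you must exclude $m\ge 8$. But the proposal never proves this. Step 3, which you yourself call decisive, ends with ``should leave only $n=4$'' and a Magma fallback. Ingredients (i)--(iii) are not an argument; $A_0\cdot A_P$ need not even be a subgroup. Also, ``three or four short orbits'' is inaccurate: a tame group of order $>12(g-1)$ has at most three. The missing computation is short. If $m\ge 8$ then $|A|\ge 384>12(g-1)$. Assuming tameness, $G^A$ is rational (indeed $G^A\subseteq G^H=\F_{11^2}(x)$) and there are at most three short orbits. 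With one or two short orbits the tame Riemann--Hurwitz formula gives $2g-2<0$. With three short orbits, with stabilizer orders $m,e_2,e_3$, you get
\[
32=48m\Bigl(1-\frac{1}{e_2}-\frac{1}{e_3}\Bigr)-48,\qquad\text{i.e.}\qquad \frac{1}{e_2}+\frac{1}{e_3}=1-\frac{5}{3m}\ge\frac{19}{24}.
\]
This forces $e_2=2$ and then $e_3=2+20/(3m-10)$, which is impossible for $m=8$ and for $m\ge 12$.

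That count requires \emph{all} of $A$ to be tame; otherwise the other short orbits could carry wild ramification and the formula changes. Your Step 2 only treats $11$-subgroups of $A_P$, and it does not finish even that. In (b) you stop at the options $g'=0,\ d=54$ and $g'=1,\ d=32$, then switch to a vague Frobenius-embedding argument and Magma. Case (b) actually closes at once: with a unique fixed place the different exponent is $10(k+1)$ with $k\ge 1$, which is neither $54$ nor $32$. The cleaner global argument, essentially the paper's, is this. An element of order $11$ is defined over $\F_{11^2}$ and permutes $\mathcal{O}$. Since $48\equiv 4\pmod{11}$, it fixes at least four places of $\mathcal{O}$. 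This contradicts the fact you quote, that on a $p$-rank zero curve an element of order $p$ fixes exactly one place. Hence $11\nmid|A|$, and the signature computation above completes the proof with no Weierstrass-semigroup or Magma input.
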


\begin{proof}
    
    Write $|A| = 48 m$, where $m$ is the order of the stabilizer of a place in the short orbit of length 48. From $|G_0| = 192$ we know that $4\mid m$. Suppose for contradiction that $m>4$, i.e., $m\geq 4$.

    We first prove that $A$ is tame, that is $m$ is not divisible by $11$. In fact assume by contradiction that $A$ is non-tame and denote with $S$ a subgroup of order $11$. Since $A$ (and hence $S$) is defined over $\mathbb{F}_{11^2}$ \cite[Theorem 3.10]{gunbyIrreducibleCanonicalRepresentations2015}, $S$ acts on the set of places of degree one over $\mathbb{F}_{11^2}$. From Since by maximality the number of such places is congruent to $1$ modulo $11$, $S$ needs to fix a place. Furthermore since maximal curves have $p$-rank zero, from \cite[Lemma 11.131]{hirschfeldAlgebraicCurvesFinite2008} we have that $S$ has exactly one fixed place. Since neither $48$ (the size of our first short orbit) nor $11^2+1+2\cdot 17 \cdot 11-48$ is congruent to $1$ modulo $11$, $S$ cannot exist.

    This proves that the contributions to the Riemann--Hurwitz formula arising from subgroups of $A$ is always tame, fact that we will use in the following.

    The number of short orbits is at most three by point (I) and (II) in the proof of Theorem 11.56 in \cite{hirschfeldAlgebraicCurvesFinite2008}. By \cite[Lemma 11.111]{hirschfeldAlgebraicCurvesFinite2008} and the fact that we have a short orbit of size 48 (given by Lemmas \ref{lem:HP48} and \ref{transitive}), there have to be at least two short orbits. Suppose there are exactly two short orbits, one has order $48$ and we write $e$ for the order of the other one. The fixed field of $A$ is rational; in fact it is a subfield of $\mathbb{F}_{11^2}(x)$. Thus, Riemann--Hurwitz applied to $G/G^A$ implies 
    \begin{align*}
        32 = 2\cdot g(G) - 2 &= -2 \cdot 48m + 48(m-1) + n(48m/n - 1) \\
        &= -48 -n,
    \end{align*}
    which is a contradiction. We conclude that there are exactly three short orbits.

    Write $e_1$, $e_2$, $e_3$ for the orders of the short orbits. We know one of them is 48, say $e_1=48$. Applying Riemann-Hurwitz now gives
    $$
        32 = 48m \left( 1 - \frac{1}{e_2} - \frac{1}{e_3}\right) - 48,
    $$
    which can be rewritten as
    $$
        \frac{1}{e_2}+\frac{1}{e_3} = 1 - \frac{5}{3m}.
    $$

    If $e_2,e_3 \geq 3$ then the left hand side is at most $\frac{2}{3}$, but 
    $$
        1- \frac{5}{3m} \leq 1 - \frac{5}{24} < \frac{2}{3},
    $$
    so we may assume $e_2 = 2$. 

    Then, 
    $$
        \frac{1}{e_3} = \frac{1}{2} - \frac{5}{3m} \Longrightarrow e_3 = 2 + \frac{20}{3m-10},
    $$
    which implies that $3m -10$ divides $20$. It is clear that this does not hold for $m=8$, and for $m\geq 12$ we have $3m - 10 > 20$, so we arrive at a contradiction. 

    Combining the above shows $A = G_0$ as wished.
\end{proof}

\begin{remark}
From \cite{BMT} every $\mathbb{F}_{p^2}$-maximal function field of genus $g \geq 2$ whose automorphism group has order greater than $84(g-1)$ is a Galois subcover of the Hermitian function field. In the paper an example of an $\mathbb{F}_{71^2}$-maximal function field that is not a Galois subfield of the Hermitian function field $H_{p}$ is provided, though the subcover (not Galois) problem is left open. The function field $G$ is the first know $\mathbb{F}_{p^2}$-maximal function field that is not a subcover of the Hermitian function field. Note that by Theorem \ref{auto} the order of the automorphism group of $G$ is $12(g-1)$, which is quite close to the $84(g-1)$ that would imply automatically the function field being a subfield of $H_p$.
\end{remark}

\section{Acknowledgments and AI tool disclosure}
This work was supported by a research grant (VIL''52303'') from Villum Fonden. This work was supported in part by a grant of access to OpenAI models through the
ChatGPT for Academic Researchers program.

GPT 5.6 Sol was used to find the function field presented in this paper. GPT 6 Astra and GPT 5.6 were used for the work that let to determining the full automorphism group. Apart from this, everything in the paper was human generated.

\bibliographystyle{abbrv}
\bibliography{ref}

\newpage

\appendix

\section{Magma code}

\begin{verbatim}
q := 11;
G<r> := GF(q^2);
F<x> := FunctionField(G);
P<w> := PolynomialRing(F);
F<y> := ext< F | (w^2-1)^2 - 1 + x^12>;

a := r^30;

P<w> := PolynomialRing(F);
A := (x+1)*(x^2+x+1);
B := (a*x+1)*((a*x)^2+a*x+1);
F<z> := ext< F | w^2 - (a+1)*A*B>;

check := true;

for P in Places(F,1) do 
    if IsOne(Evaluate(x,P)^12) then
        if not IsZero(Dimension(18*P)-5) then 
            "PROBLEM", P; check := false; break P;
        end if;
    else
        if not IsZero(Dimension(18*P)-4) then
            "PROBLEM", P; check := false; break P;
        end if;
    end if;
end for;

check;
\end{verbatim}

\bigskip 

The above code was executed and gave the output ``true'' with Magma V2.29-10 via \url{https://magma.maths.usyd.edu.au/calc/}
Seed: 2639617556; Total time: 42.020 seconds; Total memory usage: 32.09MB.

\end{document}